\newtheorem{theorem}{Theorem}
\newtheorem{lemma}[theorem]{Lemma}
\newtheorem{corollary}[theorem]{Corollary}
\DeclareMathOperator{\Cay}{Cay}
\DeclareMathOperator{\Aut}{Aut}
\DeclareMathOperator{\Inn}{Inn}
\newcommand*{\id}{\mathrm{id}}
\title{Automorphism groups of Cayley graphs generated by general transposition sets}
\author{Dion Gijswijt \thanks{Delft Institute of Applied Mathematics, Delft University of Technology, The Netherlands, {\tt d.c.gijswijt@tudelft.nl}} \and Frank de Meijer \thanks{Delft Institute of Applied Mathematics, Delft University of Technology, The Netherlands, {\tt f.j.j.demeijer@tudelft.nl}}}
\date{\today}
\begin{document}
\maketitle
\begin{abstract}
    In this paper we study the Cayley graph $\Cay(S_n,T)$ of the symmetric group $S_n$ generated by a set of transpositions $T$. We show that for $n\geq 5$ the Cayley graph is normal. As a corollary, we show that its automorphism group is a direct product of $S_n$ and the automorphism group of the transposition graph associated to $T$. This provides an affirmative answer to a conjecture raised by A.~Ganesan, \textit{Cayley graphs and symmetric interconnection networks}, showing that $\Cay(S_n,T)$ is normal if and only if the transposition graph is not $C_4$ or~$K_n$. 
\end{abstract}

\textbf{Keywords:}  automorphisms of graphs, normal Cayley graphs, transposition sets, symmetric groups

\section{Introduction}
Given a finite group $H$ and a generating subset $T \subseteq H$ with $T = T^{-1}$ and $\id \notin T$, the \emph{Cayley graph of $H$ with respect to $T$} is the simple, undirected, connected graph defined as
\[
\Cay(H,T) :=(H, \{a,ta\}:a\in H, t\in T\}).
\]

The Cayley graph is vertex transitive as its automorphism group $\Aut(\Cay(H,T))$ contains the \emph{right regular representation} $R(H)=\{\rho_a: a\in H\}$, where $\rho_a$ denotes the right multiplication $b\mapsto ba$ for $b\in H$. The Cayley graph is called \emph{normal} if $R(H)$ is a normal subgroup of the automorphism group. 

Denote by $\Aut(H)$ the group of \emph{group automorphisms of $H$} and by $\Aut(H,T) = \{f \in \Aut(H) \, : \, \, f(T) = T \}$ the set of group automorphisms that setwise fix $T$. It is known~\cite{biggs1993algebraic} that $\Aut(H,T)$ is a subgroup of $\Aut(\Cay(H,T))$. The normalizer of $R(H)$ in $\Aut(\Cay(H,T))$ equals the semidirect product of the subgroups $R(H)$ and $\Aut(H,T)$, see~\cite{godsil1981full, Xu1998}. Hence, the Cayley graph $\Cay(H,T)$ is normal if and only if 
\[
\Aut(\Cay(H,T))=R(H)\rtimes \Aut(H,T).
\]
Cayley graphs that are normal can be interpreted as those that have the smallest possible automorphism groups. The identification of Cayley graphs that are normal is an open problem in the literature.

In this work, we consider the case where $H = S_n$, the symmetric group on $n$ elements, and $T$ is a set of transpositions generating $S_n$, i.e., permutations of the form $(i\,j)$ with $i,j \in [n] := \{1, \ldots, n\}$, $i \neq j$. Since for any transposition $(i\,j)$ we have $(i\,j)=(j\,i)$, we can identify a transposition with the unordered pair $\{i,j\}$. Let $E(T)$ denote the set of unordered pairs corresponding to the transpositions in $T$. Thus, the set of transpositions can be encoded as the edge set of a graph $G(T)=([n],E(T))$, the so-called \emph{transposition graph of $T$}. One easily verifies that the set $T$ generates $S_n$ if and only if $G(T)$ is connected, and hence $T$ is a minimal generating set for $S_n$ if and only if $G(T)$ is a tree~\cite{godsil2001algebraic}. Cayley graphs of the form $\Cay(S_n, T)$ are often studied as the topology of interconnection networks, see for instance~\cite{GanesanInterconnection,Heydemann,stacho1998bisection,zhang2005automorphism}. Moreover, Cayley graphs generated by transpositions have a close connection to several sorting algorithms~\cite{Jerrum} like bubble-sort and modified bubble-sort, since finding the cheapest way to sort a sequence of integers boils down to finding a shortest path in $\Cay(S_n,T)$. Finally, graphs of the form $\Cay(S_n,T)$ are recently exploited to find optimal embeddings of qubits in a quantum computing system~\cite{MatsuoYamashita,DeMeijerEtAl2024}. 

The automorphism group of graphs of the form $\Cay(S_n, T)$ has gathered notable attention in the literature. Godsil and Royle~\cite{godsil2001algebraic} show that if $G(T)$ is an asymmetric tree, then $\Aut(\Cay(S_n,T))$ is isomorphic to $S_n$. This result is strengthened by Feng~\cite{feng2006automorphism}, proving that $\Aut(\Cay(S_n,T))$ equals $R(S_n)\rtimes \Aut(S_n,T)$ when $G(T)$ is an arbitrary tree, implying the normality of $\Cay(S_n,T)$. Ganesan~\cite{ganesan2013automorphism} further strengthens this result, showing that the condition can be generalized to $G(T)$ having girth at least five.

There are known instances where $\Cay(S_n,T)$ is not normal. If $G(T)$ is a four-cycle, the group $\Aut(\Cay(S_n,T))$ has $768$ elements instead of $192$ (see \cite{ganesan2013automorphism}). If $G(T)$ is a complete graph with $n\geq 3$ vertices, then $\Aut(S_n, T)\cong (R(S_n)\rtimes \Inn(S_n))\rtimes \mathbb{Z}_2$ as was shown in~\cite{ganesan2015automorphism}. However, it was conjectured in~\cite{GanesanInterconnection} that these are the only two graph structures (with $n \geq 3$) for which normality does not hold. 

In the present paper, we prove that this conjecture is indeed true, providing a full answer to the question which Cayley graphs generated by transpositions are normal and which are not. Observe that for $n \leq 4$, all Cayley graphs of the form $\Cay(S_n,T)$ fall into one of the above-mentioned categories. Our goal is to prove the following theorem with respect to Cayley graphs with $n \geq 5$. 

\begin{theorem} \label{Thm:MainTheorem}
    Suppose that $n \geq 5$ and that $G(T)$ is not isomorphic to $K_n$. Then $\Aut(\Cay(S_n,T)) = R(S_n) \rtimes \Aut(S_n,T)$, implying that $\Cay(S_n,T)$ is normal.
\end{theorem}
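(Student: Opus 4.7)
The plan is to analyze the stabilizer $A_{\id} := \Aut(\Cay(S_n,T))_{\id}$ of the identity. By vertex-transitivity one has $|\Aut(\Cay(S_n,T))| = n!\cdot |A_{\id}|$, and $\Aut(S_n,T)$ embeds in $A_{\id}$ as recorded in the introduction. Furthermore $\Aut(S_n,T)\cong\Aut(G(T))$ for every $n\ge 3$: the outer automorphism of $S_6$ maps transpositions to triple transpositions, so every $T$-preserving automorphism of $S_n$ is inner and corresponds to a vertex automorphism of $G(T)$. The theorem therefore reduces to the single inequality $|A_{\id}|\le|\Aut(G(T))|$.

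First I would introduce the restriction homomorphism $\phi:A_{\id}\to\Sym(T)$, $\sigma\mapsto\sigma|_T$, which is well defined since $T$ is precisely the neighborhood of $\id$. Next I would show $\phi(A_{\id})\subseteq\Aut(G(T))$ via a common-neighbor count: for distinct $s_1,s_2\in T$, an element $x\in S_n\setminus\{\id\}$ is a common neighbor of $s_1,s_2$ iff $xs_1,xs_2\in T$. A direct case analysis, using that a product of two distinct transpositions is either a $3$-cycle or a disjoint double transposition, shows the number of such $x$ equals $1$ if $s_1,s_2$ are disjoint edges of $G(T)$ (the common neighbor being $s_1s_2$), equals $0$ if they share a vertex and lie in no triangle of $G(T)$, and equals $2$ if they lie in a triangle (the common neighbors being the two $3$-cycles on the triangle's vertex set). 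Since $\sigma\in A_{\id}$ preserves common-neighbor counts, $\phi(\sigma)$ preserves disjointness of edges and therefore lies in $\Aut(L(G(T)))$. Whitney's theorem, applicable because $G(T)$ is connected with $n\ge 5$ vertices, identifies $\Aut(L(G(T)))$ with $\Aut(G(T))$, yielding $\phi(A_{\id})\subseteq\Aut(G(T))$.

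The third step, and the main technical challenge, is the injectivity of $\phi$. The key observation is that the inversion $\iota:x\mapsto x^{-1}$ lies in $\ker\phi$ whenever it is a graph automorphism of $\Cay(S_n,T)$, and this occurs iff $T$ is $S_n$-conjugation invariant, iff $G(T)=K_n$. So the hypothesis $G(T)\not\cong K_n$ is precisely what removes $\iota$ from the picture. To show $\ker\phi$ is trivial otherwise, I would argue by induction on the Cayley distance from $\id$ that any $\psi\in\ker\phi$ is the identity. The base case is trivial, and at distance $2$: for disjoint pairs $s_1,s_2\in T$ the uniqueness of the common neighbor forces $\psi(s_1s_2)=s_1s_2$; for sharing pairs outside any triangle a similar unique-predecessor argument pins down $\psi(s_2s_1)$. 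The delicate case is a triangle $\{(a,b),(a,c),(b,c)\}\subseteq T$, where $\psi$ could a priori swap the two $3$-cycles $(a,b,c)$ and $(a,c,b)$. I would rule this out by exploiting a transposition $(a,d)\in T$ with $d\notin\{b,c\}$ (available since $G(T)$ is connected with $n\ge 5$ vertices), or more generally a non-edge of $G(T)$ incident to a triangle vertex (available since $G(T)\ne K_n$), and showing that one of the two candidate images is incompatible with $\psi$ fixing this auxiliary edge.

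The \emph{main obstacle} is this faithfulness argument: the triangle ambiguity is essentially a local manifestation of the inversion $\iota$, and one must show that it cannot be consistently extended to a graph automorphism when $G(T)\ne K_n$. The propagation step will likely proceed by iteratively applying the graph-automorphism condition $\psi(tx)=t'\psi(x)$ with $t'\in T$ along short cycles through $\id$, and tracking how a hypothetical swap on one triangle forces incompatible choices on overlapping triangles or on non-triangle-completed sharing pairs, ultimately yielding a contradiction unless $T$ is $S_n$-conjugation invariant.
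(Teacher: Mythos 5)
Your framing is sound and is essentially the paper's: restricting an automorphism in the stabilizer of $\id$ to $N(\id)=T$, showing via common-neighbor counts ($1$ for disjoint edges, $0$ for adjacent edges in no triangle, $2$ for edges of a triangle) that the restriction lies in $\Aut(L(G(T)))$, and invoking Whitney's theorem are exactly Lemma~\ref{Lemma:Commute} and Theorem~\ref{Theorem:Whitney}; your ``injectivity of $\phi$'' is precisely the criterion of Theorem~\ref{Theorem:CharacterizationNormal}, and your observation that the inversion map is the offending kernel element exactly when $G(T)=K_n$ is a nice way to see why that case must be excluded. The problem is that this injectivity \emph{is} the theorem, and you have only outlined it (``I would argue'', ``will likely proceed'', ``the main obstacle is this faithfulness argument''). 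The entire content of Section~\ref{Section:Proof} --- propagating the condition ``$\Phi$ fixes $\sigma$ and its whole closed neighborhood'' across $\Gamma$, and the case analysis of the components of $G-\{1,2\}$ via Lemmas~\ref{Lemma:disjoint}--\ref{Lemma:Cycle} --- is missing from your proposal.

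There is also a concrete point where the sketch, as stated, would fail. For adjacent edges of $G(T)$ that are not in a triangle, your ``unique-predecessor argument'' is not available in general: the relevant rigidity statement (Lemma~\ref{Lemma:CycleCayley}, uniqueness of the alternating $6$-cycle through $t\sigma$, $\sigma$, $s\sigma$) requires that the two edges lie on \emph{no common cycle of length at most $4$} in $G(T)$. If they lie on a common $4$-cycle, Lemma~\ref{Lemma:Cases6cycle} produces several competing $6$-cycles and the local argument collapses; this is exactly the obstruction that confined Ganesan's earlier theorem to girth at least $5$, and it is where the new work in this paper happens (case II(d) of the component analysis and the closing paragraph, which resolves the situation where \emph{every} component of $G-\{1,2\}$ yields a $C_4$ by a global counting argument identifying $(1\,2)$ as the unique transposition non-commuting with $2t$ others). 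Your triangle discussion has the right instincts --- break the potential swap of the two $3$-cycles using an auxiliary edge, which is what Lemmas~\ref{Lemma:Triangle} and~\ref{Lemma:InterTriangle} combined with degree comparisons accomplish --- but until the $C_4$ case and the propagation mechanism are actually carried out, the proposal is a plan for a proof rather than a proof.
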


It was shown in \cite{ganesan2016automorphism} that if $T\subseteq S_n$ is a generating set of transpositions such that $n\geq3$ and $\Cay(S_n,T)$ is normal, then $\Aut(\Cay(S_n,T))$ is the internal direct product of $R(S_n)$ and $L(\Aut(G(T)))$, where $L$ denotes the left regular representation. Hence, we obtain the following corollary.

\begin{corollary}
    \label{Thm:MainCorollary}
    Suppose that $n \geq 5$ and that $G(T)$ is not isomorphic to $K_n$. Then $\Aut(\Cay(S_n,T))$ is isomorphic to $S_n \times \Aut(G(T))$. 
\end{corollary}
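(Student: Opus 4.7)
The corollary will be a short bridge between Theorem \ref{Thm:MainTheorem} and the structural result of Ganesan quoted immediately before it, so the proof plan is essentially an assembly rather than a deep argument. First I would invoke Theorem \ref{Thm:MainTheorem}: under the hypotheses $n\geq 5$ and $G(T)\not\cong K_n$, the Cayley graph $\Cay(S_n,T)$ is normal, which is exactly the input needed for the cited result from \cite{ganesan2016automorphism}. Since $n\geq 5 \geq 3$, the hypotheses of that result are satisfied, and we immediately obtain that $\Aut(\Cay(S_n,T))$ decomposes as the internal direct product
\[
\Aut(\Cay(S_n,T)) = R(S_n)\cdot L(\Aut(G(T))),
\]
with the two factors intersecting trivially and centralizing each other.

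The remaining work is purely formal: convert this internal direct product into the claimed external one. The right regular representation $R\colon S_n\to \Sym(S_n)$ is a faithful homomorphism (since the stabilizer of any point under right multiplication is trivial), hence $R(S_n)\cong S_n$ as abstract groups. Likewise, viewing $\Aut(G(T))$ as a subgroup of $S_n$ via its natural action on $[n]$, the left regular action $L$ of this subgroup on the set $S_n$ (by left multiplication) is faithful for the same reason, so $L(\Aut(G(T)))\cong \Aut(G(T))$. The standard fact that an internal direct product of two subgroups is isomorphic to the external direct product of the isomorphism types of those subgroups then gives
\[
\Aut(\Cay(S_n,T))\;\cong\;S_n\times \Aut(G(T)),
\]
as required.

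The main obstacle in the whole package is of course the normality statement \textbf{itself}, i.e.\ Theorem \ref{Thm:MainTheorem}; once that is in hand, the corollary carries no additional difficulty. The only small points to check explicitly in a careful write-up are (i) that $L$ and $R$ are faithful, which is immediate from the fixed-point-free nature of left and right multiplication on a group, and (ii) that the two subgroups $R(S_n)$ and $L(\Aut(G(T)))$ commute element-wise, which follows from the associativity identity $(a\cdot b)\cdot c = a\cdot (b\cdot c)$ applied to left and right multiplications on $S_n$ and is implicitly part of Ganesan's internal-direct-product assertion. No new computations with transpositions or the graph $G(T)$ are needed beyond what already entered Theorem \ref{Thm:MainTheorem}.
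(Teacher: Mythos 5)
Your proposal is correct and matches the paper's own (implicit) proof exactly: the paper likewise combines Theorem~\ref{Thm:MainTheorem} with the cited result of~\cite{ganesan2016automorphism} giving the internal direct product of $R(S_n)$ and $L(\Aut(G(T)))$, and then passes to the external direct product $S_n \times \Aut(G(T))$. Your added remarks on the faithfulness of the regular representations are routine but harmless elaborations of the same argument.
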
 

In Section~\ref{Section:Prelim} we review and derive some preliminary results on the automorphism group of Cayley graphs generated by transpositions. Section~\ref{Section:Proof} provides the proof of Theorem~\ref{Thm:MainTheorem}.

\section{Cayley graphs generated by transpositions} \label{Section:Prelim}
Throughout the rest of the paper, we let $T\subseteq S_n$ be a generating set of transpositions. We will denote by $G=G(T)$ the associated transposition graph and by $\Gamma=\Cay(S_n,T)$ the associated Cayley graph. We note that $\Gamma$ is bipartite since transpositions are odd permutations.

In this section we consider some structural properties of the graph $\Gamma$. We start by reviewing some preliminaries about $\Aut(\Gamma)$ in Section~\ref{Sec:PrelimAuto}, after which we derive some useful lemmas about transpositions and their induced structure in $\Gamma$ in Section~\ref{Sec:PrelimTrans}.

\subsection{Preliminaries on \boldmath $\Aut(\Gamma)$} \label{Sec:PrelimAuto}
We denote by $L(G)$ the line graph of $G$, by $K_n$ the complete graph on $n$ vertices and by $K_{n,m}$ the complete bipartite graph with partitions of size $n$ and $m$.

Every automorphism $\phi$ of $G$ induces an automorphism $\phi'$ of $L(G)$ given by $\phi'(\{i,j\})=\{\phi(i), \phi(j)\}$ for all $\{i,j\} \in E(T)$. It is easy to see that if $G$ has at most one isolated vertex and no component of two vertices, then the map $\phi\mapsto \phi'$ is an injective group homomorphism from $\Aut(G)$ to $\Aut(L(G))$. Whitney~\cite{Whitney1932} showed that, except for a few cases, this map is in fact an isomorphism. 

\begin{theorem}[\cite{Whitney1932}] \label{Theorem:Whitney}
Let $G$ be a graph with at most one isolated vertex, and no component equal to $K_2$. If $G$ has no component equal to $K_4$, $K_4-e$ (i.e., $K_4$ minus one edge) or a triangle with a pendant edge and $G$ does not have both a $K_3$-component and a $K_{3,1}$-component, then the map $\phi\mapsto \phi'$ is a group isomorphism from $\Aut(G)$ to $\Aut(L(G))$.
\end{theorem}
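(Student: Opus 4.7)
Since the map $\phi \mapsto \phi'$ is already known to be injective, the plan is to establish surjectivity by recovering the vertex set of $G$ intrinsically from the clique structure of $L(G)$. For each non-leaf $v \in V(G)$, the set $C_v := \{e \in E(G) : v \in e\}$ is a clique in $L(G)$; the family $\{C_v\}$ partitions the edges of $L(G)$ into cliques such that every vertex of $L(G)$ lies in one or two of them. Once I show that this partition is determined by the abstract structure of $L(G)$, any $\psi \in \Aut(L(G))$ must permute the $C_v$'s and hence induce a bijection $\phi$ on the non-leaf vertices of $G$. Extending $\phi$ to pendant vertices by tracking each pendant edge inside its unique vertex clique produces the required $\phi \in \Aut(G)$, and a direct verification gives $\phi' = \psi$.

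The central technical step is the triangle classification in $L(G)$. Every triangle of $L(G)$ comes either from a \emph{star} $K_{1,3}$ in $G$ (three edges sharing a common endpoint, all contained in one vertex clique) or from a \emph{triangle} $K_3$ in $G$ (three pairwise adjacent edges forming a $3$-cycle). The distinguishing criterion I would use is the following: a triangle $\{e_1,e_2,e_3\}$ in $L(G)$ is a star triangle if and only if there is some $f \in V(L(G)) \setminus \{e_1,e_2,e_3\}$ adjacent to an odd number (one or three) of the $e_i$'s. A short direct check shows that the forbidden components of the theorem are precisely the local configurations where this criterion collapses: inside $K_4$, $K_4 - e$, or a triangle with a pendant edge, every vertex of $L(G)$ adjacent to a given triangle turns out to meet exactly two of its vertices, so star and book triangles are indistinguishable; and the joint exclusion of $K_3$- and $K_{1,3}$-components removes the classical coincidence $L(K_3) = L(K_{1,3}) = K_3$, where two non-isomorphic graphs produce identical line graphs.

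Once the criterion is available, the vertex cliques $C_v$ can be characterized intrinsically, for instance as the maximal cliques of $L(G)$ all of whose triangles are star triangles, and $\psi$ automatically permutes them. The main obstacle is the case analysis behind the triangle criterion: for a triangle on $\{u,v,w\}$ in $G$, every edge of $G$ sharing a vertex with $\{u,v,w\}$ must meet $L(G)$'s triangle in exactly zero or two vertices, unless an additional edge within or adjacent to $\{u,v,w\}$ creates one of the forbidden components. This finite case check, together with verifying that no other local obstruction exists, is the crux of Whitney's original argument.
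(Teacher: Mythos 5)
The paper does not prove this statement at all: Theorem~\ref{Theorem:Whitney} is quoted from Whitney's 1932 paper and used as a black box, so there is no in-paper proof to compare against. Your outline does follow the standard route to Whitney's theorem (recover the Krausz partition of $L(G)$ into vertex-cliques $C_v$, show any $\psi\in\Aut(L(G))$ permutes the cells, read off $\phi\in\Aut(G)$), and your odd-triangle criterion is the right distinguishing tool: for a triangle of $L(G)$ arising from a $K_3$ in $G$, every other edge of $G$ meets it in $0$ or $2$ vertices, whereas for a star triangle one generically finds an edge meeting it in $1$ or $3$, and the exceptions are exactly the forbidden components. So the skeleton is sound.

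However, as written the proposal has genuine gaps. First, the decisive content --- the finite case analysis proving the triangle criterion and the verification that the Krausz partition is intrinsically determined --- is only gestured at (``is the crux of Whitney's original argument''), so nothing is actually established. Second, your proposed intrinsic characterization of the cells, ``the maximal cliques of $L(G)$ all of whose triangles are star triangles,'' is false as stated: if $v$ has degree $2$ and its two incident edges lie in a triangle $uvw$ of $G$, then $C_v=\{uv,vw\}$ is properly contained in the clique $\{uv,vw,uw\}$ and hence is not a maximal clique, yet it must still be a cell of the partition. Recovering cells of size $1$ and $2$ requires separate care in the standard proof. Third, the criterion fails vacuously on a $K_{1,3}$ component (which the hypotheses permit when no $K_3$ component is present): there is no edge $f$ outside the star triangle at all, so your test labels it a $K_3$-type triangle; the conclusion still holds there because every permutation of $L(K_{1,3})=K_3$ is induced by an automorphism of $K_{1,3}$, but that case must be argued separately rather than absorbed into the general machinery. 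Until these points are filled in, the proposal is an outline of the correct strategy rather than a proof.
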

In particular, Theorem~\ref{Theorem:Whitney} implies that if $G$ is a connected graph on at least five vertices, every automorphism of $L(G)$ is induced by a unique automorphism of $G$. 



The proof of our main result, Theorem~\ref{Thm:MainTheorem}, relies on the exploitation of the following alternative characterization for normality of $\Gamma$ due to Ganesan~\cite{ganesan2015automorphism}. 

\begin{theorem}[\cite{ganesan2015automorphism}] \label{Theorem:CharacterizationNormal}
Let $T\subseteq S_n$ be a generating set of transpositions, where $n \geq 5$. Then $\Cay(S_n,T)$ is normal if and only if the identity map is the only automorphism of $\Cay(S_n,T)$ that fixes the identity vertex $\id$ and each of its neighbors. 
\end{theorem}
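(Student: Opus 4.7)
The plan is to prove both directions of the equivalence, with the forward direction being essentially formal and the converse requiring a combinatorial argument plus Whitney's theorem.

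For the forward direction ($\Rightarrow$), if $\Gamma = \Cay(S_n, T)$ is normal, then by the discussion in the introduction $\Aut(\Gamma) = R(S_n) \rtimes \Aut(S_n, T)$, so the stabilizer $\Aut(\Gamma)_{\id}$ equals $\Aut(S_n, T)$. Since $T$ generates $S_n$, the group $\Aut(S_n, T)$ acts faithfully on $T$; hence the only element of $\Aut(\Gamma)_{\id}$ fixing $T$ pointwise is the identity.

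For the reverse direction ($\Leftarrow$), the aim is to show $\Aut(\Gamma)_{\id} \subseteq \Aut(S_n, T)$ (the opposite inclusion is automatic). Given $\sigma \in \Aut(\Gamma)_{\id}$, it permutes the neighbors of $\id$, which coincide with $T$. The crux is to upgrade this to: $\sigma|_T$ is an automorphism of the line graph $L(G)$, i.e., it preserves the ``share-a-vertex-in-$G$'' relation on transpositions. I would prove this by computing, for each pair of distinct $s, t \in T$, the number of common neighbors of $s$ and $t$ in $\Gamma$. Writing a common neighbor as $\pi = us$ with $u \in T$ and requiring $u \cdot st \in T$, a direct calculation shows that the count equals $2$ when $s$ and $t$ are disjoint (the common neighbors being $\id$ and $st$), and equals either $1$ or $3$ when $s$ and $t$ share a vertex — the latter case occurring precisely when the third edge of the triangle spanned by their supports also lies in $T$. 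Since these three values are distinct and $\sigma$ preserves common-neighbor counts, $\sigma|_T$ preserves the ``disjoint'' class, and therefore the adjacency relation of $L(G)$.

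Once $\sigma|_T \in \Aut(L(G))$ is established, the rest is mechanical. By Theorem~\ref{Theorem:Whitney}, since $G$ is connected on $n \geq 5$ vertices and therefore avoids all Whitney exceptions, $\sigma|_T$ is induced by a unique $\phi \in \Aut(G)$. Conjugation by $\phi$ yields an inner automorphism $f_\phi$ of $S_n$ sending $(i\,j) \mapsto (\phi(i)\,\phi(j))$, which preserves $T$ and hence lies in $\Aut(S_n, T)$. By construction $f_\phi|_T = \sigma|_T$, so $\sigma \circ f_\phi^{-1} \in \Aut(\Gamma)_{\id}$ fixes every neighbor of $\id$ and equals $\id$ by assumption. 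Thus $\sigma = f_\phi \in \Aut(S_n, T)$.

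The main obstacle is the combinatorial claim about common-neighbor counts. One must verify that no transposition $u \in T$ with support outside the two to four vertices involved in $s$ and $t$ contributes an unexpected common neighbor; this reduces to checking that a product such as $u\cdot (i\,j)(k\,l)$ is a transposition only when $u$ has a controlled, restricted support, which is a finite but delicate case analysis.
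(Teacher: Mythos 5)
Your proposal is correct, but note that the paper does not prove this statement at all: it is imported verbatim from Ganesan's work \cite{ganesan2015automorphism}, so there is no in-paper proof to compare against. Your reconstruction is sound in both directions. The forward direction is the standard stabilizer computation (the point stabilizer of $\id$ in $R(S_n)\rtimes\Aut(S_n,T)$ is $\Aut(S_n,T)$, which acts faithfully on the generating set $T$). For the converse, your common-neighbour count is exactly the content of Lemma~\ref{Lemma:Commute}(b) and its proof in Section~\ref{Sec:PrelimTrans}: the number of common neighbours of $s,t\in T$ in $\Gamma$ is $2$ when the edges are disjoint and $1$ or $3$ when they meet, because decompositions of $ts$ into two transpositions force the supports into $\{i,j,k,\ell\}$ (the ``delicate'' verification you flag is the easy support argument: if $uv$ is a product of two disjoint transpositions it moves four points, so $u$ and $v$ must be disjoint with union of supports equal to those four points, leaving only two decompositions; a $3$-cycle has exactly three). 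Combined with Whitney's theorem (valid since $G$ is connected on $n\geq 5$ vertices) and the hypothesis applied to $\sigma\circ f_\phi^{-1}$, this gives $\Aut(\Gamma)_{\id}=\Aut(S_n,T)$ and hence normality. Your argument is in fact the same strategy the paper deploys in Section~\ref{Section:Proof} for its main theorem, just applied at the base point $\id$ rather than propagated along $\Gamma$.
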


\subsection{Preliminaries on transpositions} \label{Sec:PrelimTrans}

We now derive a few preliminary results on the transpositions in $T$ and their induced structure in the Cayley graph $\Gamma$. 

\begin{lemma} \label{Lemma:Commute}
Let $a,b,c\in T$ be distinct and let $\sigma\in S_n$. Then
\begin{itemize}
\item[\textup{(a)}] Transpositions $a$ and $b$ commute if and only if $a$ and $b$ correspond to disjoint edges in $G$.
\item[\textup{(b)}] Transpositions $a$ and $b$ commute if and only if there is a unique $\tau\in S_n$ such that $(\sigma, a\sigma, \tau \sigma, b\sigma,\sigma)$ is a $4$-cycle in $\Gamma$. In this case, $\tau=ab$.
\item[\textup{(c)}] The edges corresponding to $a,b,c$ form a triangle in $G$ if and only if there exist $\tau_1,\tau_2\in S_n$ such that $\{\sigma, a\sigma, b\sigma, c\sigma, \tau_1\sigma ,\tau_2 \sigma\}$ induces a $K_{3,3}$ subgraph in $\Gamma$. In that case, $\{\tau_1,\tau_2\}=\{ab=ca=bc,ba=ac=cb\}$. 
\end{itemize}
\end{lemma}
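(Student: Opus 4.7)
The plan is as follows. Since $R(S_n) \leq \Aut(\Gamma)$, right-translation by $\sigma^{-1}$ is a graph automorphism, so all three parts may be reduced to the case $\sigma = \id$. Part~(a) is then an immediate calculation: if $a$ and $b$ have disjoint supports they commute, while if they share exactly one index (the only remaining case since $a\neq b$), $ab$ and $ba$ are distinct $3$-cycles, inverses of each other.

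For (b), a closed walk $(\id, a, \tau, b, \id)$ is a $4$-cycle in $\Gamma$ precisely when $\tau \in S_n\setminus\{\id, a, b\}$ and both $t := \tau a$ and $s := \tau b$ lie in $T$; rearranging and using that transpositions are involutions gives the equivalent condition $ab = ts$ with $t, s \in T$ and $t\neq a$ (equivalently $\tau\neq \id$; automatically $s\neq b$). The task therefore reduces to counting the non-degenerate factorizations of $ab$ into an ordered pair of transpositions from $T$. If $a,b$ commute, then $ab$ is a double transposition whose only factorizations into two transpositions are $(a,b)$ and $(b,a)$; only $(b,a)$ satisfies $t\neq a$, forcing $\tau = ab$ uniquely. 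If $a,b$ do not commute, then $ab$ is a $3$-cycle on a set $\{i,j,k\}$ with $a=(i\,j),\, b=(j\,k)$, and every factorization of $ab$ into two transpositions uses edges of the triangle on $\{i,j,k\}$; after discarding $t = a$, one obtains either zero or exactly two admissible factorizations, according to whether the third edge $(i\,k)$ lies in $T$. Hence uniqueness of $\tau$ is equivalent to commutation, and then $\tau = ab$.

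For (c), the forward direction uses the triangle $\{a,b,c\} = \{(i\,j), (j\,k), (i\,k)\}$ in $G$ and sets $\tau_1 = ab = (i\,j\,k)$ and $\tau_2 = ba = (i\,k\,j)$. The $S_3$-identities $ab = ca = bc$ and $ba = ac = cb$ show that each product $\tau_\ell x$ with $x \in \{a,b,c\}$ again lies in $\{a,b,c\} \subseteq T$, so each of $\tau_1\sigma, \tau_2\sigma$ is adjacent to every one of $a\sigma, b\sigma, c\sigma$; parity (bipartiteness of $\Gamma$, with $\tau_1, \tau_2$ even) supplies the missing non-edges within the two bipartition classes.

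For the converse of (c), suppose the six vertices induce $K_{3,3}$. Each pair $\{x,y\} \subseteq \{a,b,c\}$ then admits two distinct closing vertices $\tau_1\sigma$ and $\tau_2\sigma$, so by part~(b) $\{x,y\}$ is non-commuting and lies on a triangle in $G$, with $\{\tau_1, \tau_2\} = \{xy, yx\}$. Comparing $\{ab, ba\} = \{ac, ca\}$ and using $c\neq b$ forces $c = aba$, which is the third edge of the triangle determined by $a$ and $b$; the identities describing $\{\tau_1, \tau_2\}$ then follow from the triangle relations as in the forward direction. The main obstacle will be precisely this converse: several a~priori distinct configurations (such as a ``star'' of three transpositions through a common vertex) must be excluded, and it is the two-$\tau$ requirement imported from (b) that collapses them to the triangle case.
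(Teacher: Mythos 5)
Your proposal is correct and follows essentially the same route as the paper: reduce to $\sigma=\id$ and count the factorizations of $ab$ into ordered pairs of transpositions from $T$ (equivalently, the length-$2$ paths from $a$ to $b$ in $\Gamma$), splitting into the commuting and non-commuting cases. If anything, your converse of (c) is slightly more complete than the paper's, since comparing $\{ab,ba\}=\{ac,ca\}$ explicitly forces $c=aba$ and hence that all three transpositions lie on one common triangle, a point the paper leaves implicit.
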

Part (b) was also shown in \cite{ganesan2015automorphism}.
\begin{proof}
Part (a) is clear. For part (b) and (c) we may assume $\sigma=\id$ and set $p=ba^{-1}=ba$. The paths $(a,sa, tsa=b)$ of length $2$ from $a$ to $b$ in $\Gamma$ correspond bijectively to the decompositions $p=ts$ of $p$ as a product of transpositions $t,s\in T$. We have the following cases:
\begin{itemize}
\item[I.] Transpositions $a$ and $b$ commute. Say, without loss of generality, that $a=(1\,2)$ and $b=(3\,4)$. There are exactly two ways to write $p$ as a product of two transpositions: $p=(3\, 4)(1\,2)$ and $p=(1\, 2)(3\, 4)$. Hence, $(a, \id, b)$ and $(a, ba, b)$ are the only paths of length $2$ from $a$ to $b$.
We see that there is a unique $\tau$ such that $(\id, a, \tau, b,\id)$ is a $4$-cycle in $\Gamma$, and $\tau=ba=ab$. 

\item[II.] Transpositions $a$ and $b$ do not commute. Say, without loss of generality, that $a=(1\,2)$ and $b=(2\,3)$. There are exactly three ways to write $p=(1\,3\,2)$ as a product of two transpositions: 
\[
(1\,3\,2)=(2\,3)(1\,2)=(1\,3)(2\,3)=(1\,2)(1\,3).
\]
If $(1\, 3)\not\in T$ we see that $(a, \id, b)$ is the only path of length~$2$ from $a$ to~$b$. If $(1\, 3)\in T$, there are exactly three paths of length~$2$ from $a$ to~$b$: $(a,\id, b)$, $(a, \tau, b)$, $(a, \tau', b)$ where $\tau=(2\, 3)(1\, 2)=(1\, 3\, 2)$ and $\tau'=(1\, 3)(1\, 2)=(1\, 2\, 3)$. 
\end{itemize}
The proof of (b) now follows. If $a$ and $b$ commute, a unique $\tau$ as in (b) exists. If $a$ and $b$ do not commute there is either no such $\tau$ or there is more than one. 

To show (c), we first suppose that $a,b,c$ form a triangle in $G$. We may assume that $a=(1\, 2)$, $b=(2\, 3)$ and $c=(1\, 3)$. We see that $\{\id, (1\, 2\, 3 ),(1\, 3\, 2)\}\cup\{(1\, 2),(2\, 3),(1\, 3)\}$ induces a $K_{3,3}$ in $\Gamma$ and we can take $\tau_1=(1\, 2\, 3)$ and $\tau_2=(1\, 3\, 2)$.

Conversely, if $a,b$ are vertices of an induced $K_{3,3}$ in $\Gamma$, then there must be at least three paths of length $2$ from $a$ to $b$ (as $\Gamma$ is bipartite, $a$ and $b$ are in the same color class of the $K_{3,3}$.)  So $a$ and $b$ must be part of a triangle in $G$. 
\end{proof}
The following intermediate result is used to derive another substructure in $\Gamma$ based on non-commuting transpositions. 
\begin{lemma} \label{Lemma:Cases6cycle}
The $4$-tuples of transpositions $a,b,c,d$ such that $abcd=(1\,2\,3)$ and no two consecutive transpositions in the sequence $(1\, 2),a,b,c,d,(2\,3)$ commute, are precisely the tuples of the following eight types, where $k\not\in \{1,2,3\}$:
\begin{multicols}{2}
\begin{itemize}
    \item $(1\,3),(2\,3),(1\,2),(1\,3)$
    \item $(1\,3),(1\,k),(1\,2),(2\,k)$
    \item $(2\,3),(1\,2),(2\,3),(1\,2)$
    \item $(2\,3),(3\,k),(1\,k),(3\,k)$
    
\end{itemize}
\begin{itemize}
    \item $(2\,k),(2\,3),(3\,k),(1\,3)$
    \item $(2\,k),(1\,k),(3\,k),(2\,k)$
    \item $(1\,k),(3\,k),(1\,k),(1\,2)$
    \item $(1\,k),(1\,2),(2\,3),(3\,k)$

\end{itemize}
\end{multicols}

\end{lemma}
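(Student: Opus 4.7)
The plan is to perform a direct case analysis on $a$. Two distinct transpositions fail to commute precisely when their supports (the two-element sets they move) meet in a single element. Applied to the pair $((1\,2),a)$, this forces $a$ to lie in one of four families: $(1\,3)$, $(2\,3)$, $(1\,k)$, or $(2\,k)$ with $k\notin\{1,2,3\}$. Analogously, the condition on $d$ and $(2\,3)$ constrains $d\in\{(1\,2),(1\,3),(2\,k),(3\,k)\}$. I split the argument into the four cases for $a$.

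Given $a$, I rewrite the equation $abcd=(1\,2\,3)$ as $cd = ba\cdot(1\,2\,3)$, using that $a$ and $b$ are involutions. Once $a$ and $b$ are fixed, the element $cd$ is determined and I enumerate the ways to write it as an ordered product of two transpositions. A product of two transpositions has cycle type either the identity, a $3$-cycle, or a product of two disjoint transpositions, admitting respectively only the trivial decompositions with $c=d$ (ruled out since $c$ and $d$ would then commute), three decompositions, or two decompositions. Crucially, $ba(1\,2\,3)$ moves only elements of $\{1,2,3\}$ together with those moved by $a$ or $b$; so if $b$ introduces an element $\ell\notin\{1,2,3\}$ not already moved by $a$, the support of $cd$ has size $\geq 5$ and no decomposition into two transpositions can exist. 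This pares the admissible $b$'s down to a short list for each of the four $a$'s.

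For each remaining pair $(a,b)$, I compute $ba(1\,2\,3)$, list its decompositions into $(c,d)$, and verify the remaining three non-commutation conditions ($b$ with $c$, $c$ with $d$, and $d$ with $(2\,3)$). Most candidates fail quickly for one of three reasons: $c=b$, $c=d$, or $d$ is disjoint from $(2\,3)$. Collecting the survivors across all cases yields exactly the eight tuples listed in the statement; in particular the enumeration simultaneously verifies that each listed tuple does satisfy the hypotheses.

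The main obstacle is organizational: the procedure generates on the order of thirty candidate quadruples to compute and screen, and one must keep track of the parameter $k$ consistently across cases. A useful shortcut is the symmetry $(a,b,c,d)\mapsto \bigl((1\,3)d(1\,3),\,(1\,3)c(1\,3),\,(1\,3)b(1\,3),\,(1\,3)a(1\,3)\bigr)$: reversing the sequence replaces the product $(1\,2\,3)$ by its inverse $(1\,3\,2)$, while conjugation by $(1\,3)$ swaps $(1\,2)\leftrightarrow(2\,3)$ and maps $(1\,3\,2)\mapsto(1\,2\,3)$, so the composition sends solutions to solutions. This map fixes four of the listed tuples and interchanges the remaining pairs, cutting the amount of independent verification roughly in half.
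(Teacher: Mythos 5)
Your overall strategy is essentially the paper's: constrain $a$ (and $d$) by non-commutation with $(1\,2)$ (resp.\ $(2\,3)$), observe that $cd=ba(1\,2\,3)$ is determined by $(a,b)$ and must decompose as a product of two transpositions in one of a small number of ways, and finish by finite enumeration. (The paper does the symmetric thing, fixing $a$ and $d$ and decomposing $a(1\,2\,3)d$ into $bc$.) However, your pruning step is false, and false in a way that discards genuine solutions. You claim that if $b$ moves an element $\ell\notin\{1,2,3\}$ not moved by $a$, then the support of $cd$ has size at least $5$ and the pair $(a,b)$ can be thrown out. This fails whenever $\mathrm{supp}(a)\subseteq\{1,2,3\}$: for $a=(1\,3)$ and $b=(1\,k)$ one computes $ba(1\,2\,3)=(1\,k)(1\,2)=(1\,2\,k)$, a $3$-cycle on three points, and one of its three decompositions, $(c,d)=((1\,2),(2\,k))$, produces the listed type $(1\,3),(1\,k),(1\,2),(2\,k)$. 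Likewise $(a,b)=((2\,3),(3\,k))$ gives $cd=(1\,k\,3)$ and the listed type $(2\,3),(3\,k),(1\,k),(3\,k)$. So the screen, applied as stated, eliminates exactly the pairs responsible for two of the eight types, and the enumeration cannot then ``yield exactly the eight tuples'' as you assert.

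The correct form of the bound is the one the paper uses. Since consecutive transpositions in $(1\,2),a,b,c,d,(2\,3)$ must share an element, $a,b,c,d$ together use at most five elements; if they use exactly five, their four edges form a connected spanning subgraph on five vertices, hence a tree, so $abcd$ is a $5$-cycle, contradicting $abcd=(1\,2\,3)$. Therefore at most one element outside $\{1,2,3\}$ occurs and one may assume $a,b,c,d\in S_4$ before enumerating. Your criterion is only valid in the sub-case where $|\{1,2,3\}\cup\mathrm{supp}(a)\cup\mathrm{supp}(b)|=5$, i.e.\ where $a$ already uses some $k\notin\{1,2,3\}$ and $b$ brings in a second new element; it cannot be applied when $a\in\{(1\,3),(2\,3)\}$. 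Once this is repaired the rest of your plan goes through, and your reversal-plus-conjugation symmetry is correct and does pair up the eight types as you describe, which is a nice economy the paper does not exploit.
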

\begin{proof}
It is easy to check that in each of the eight cases $abcd=(1\, 2\, 3)$ and that consecutive transpositions in $(1\, 2),a,b,c,d,(2\, 3)$ do not commute. It remains to be shown that these are all possibilities.

For a transposition $t=(i\, j)$, we will say that $i$ and $j$ are the elements \emph{used by}~$t$. Any two consecutive transpositions in the sequence $a, b, c, d$ do not commute and must therefore use a common element. This implies that $a$, $b$, $c$ and $d$ together use at most five elements. Since $abcd = (1\,2\,3)$, three of these elements must be $1$, $2$ and~$3$. Hence, without loss of generality, we may assume that $a, b, c, d 
\in S_5$. 

Suppose that $a$, $b$, $c$, $d$ together use all elements of $\{1,\ldots, 5\}$. Since consecutive transpositions do not commute, the graph on vertex set $\{1,\ldots, 5\}$ and as edges the four pairs corresponding to $a,b,c,d$, is connected and therefore a tree. It now follows (see \cite{godsil2001algebraic}) that $abcd$ is a $5$-cycle, contradicting the fact that $abcd=(1\, 2\, 3)$. We conclude that either $4$ or $5$ is not used by $a,b,c,d$, so we may assume that $a,b,c,d\in S_4$. 

The statement now follows from checking all decompositions of $a(1\, 2\,3)d$ into a product of two transpositions for all sixteen combinations $a\in \{(1\, 3),(1\, 4),(2\, 3),(2\, 4)\}$ and $d\in \{(1\, 2),(1\, 3), (2\, 4),(3\, 4)\}$.
\end{proof}
Based on Lemma~\ref{Lemma:Cases6cycle}, we now show the following result, which is a generalization of Ganesan~\cite[Theorem 4]{ganesan2013automorphism}, which relied on the girth of $G$ to be at least 5.

\begin{lemma} \label{Lemma:CycleCayley} 
Let $\sigma\in S_n$ and let $s,t\in T$ be non-commuting. Suppose that $s$ and $t$ are not in a common cycle of length at most $4$ in $G$. Then there exist unique $\tau_1,\tau_2,\tau_3\in S_n$ such that 
\[
(\sigma, s\sigma, \tau_1 \sigma,\tau_2\sigma,\tau_3\sigma,t\sigma, \sigma)
\]
is a $6$-cycle in $\Gamma$ of which any two consecutive edges correspond to non-commuting transpositions. Moreover, we have 
\[
\tau_1=ts,\quad \tau_2=sts,\quad \tau_3=tsts=st.
\]
\end{lemma}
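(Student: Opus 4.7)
The plan is to use vertex-transitivity to reduce to the case $\sigma=\id$, and then to choose labels so that $s=(1\,2)$ and $t=(2\,3)$ (which is possible because by Lemma~\ref{Lemma:Commute}(a) two non-commuting transpositions share a common element). A $6$-cycle $(\id,s,\tau_1,\tau_2,\tau_3,t,\id)$ in $\Gamma$ corresponds bijectively to a labelling of the successive edges by transpositions $s,a,b,c,d,t$ in $T$, where $\tau_1=as$, $\tau_2=bas$, $\tau_3=cbas$, and $dcbas=t$. The last equation rewrites as $dcba=ts=(1\,3\,2)$, and taking inverses (using that the $a,b,c,d$ are involutions) gives $abcd=(1\,2\,3)$. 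The hypothesis that consecutive edges correspond to non-commuting transpositions is exactly the condition in Lemma~\ref{Lemma:Cases6cycle} on the sequence $(1\,2),a,b,c,d,(2\,3)$.

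By that lemma, $(a,b,c,d)$ belongs to one of eight types. I would go through the seven types other than type~$3$ and show that each one forces $s$ and $t$ to be edges of a common cycle of length at most $4$ in $G$: types~$1$, $2$, and~$5$ all require $(1\,3)\in T$, producing a triangle on $\{1,2,3\}$ containing both $s$ and $t$; and types~$4$, $6$, $7$, and~$8$ all require both $(1\,k)$ and $(3\,k)$ in $T$ for some $k\notin\{1,2,3\}$, producing the $4$-cycle $1\text{-}2\text{-}3\text{-}k\text{-}1$ in $G$ containing both $s$ and $t$. Under the hypothesis of the lemma none of these seven types is possible, so $(a,b,c,d)$ must be of type~$3$, that is, $(a,b,c,d)=(t,s,t,s)$; this establishes uniqueness of the labelling, and hence of the $6$-cycle.

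Existence then follows by checking that type~$3$ always yields a valid $6$-cycle. The formulas read off as $\tau_1=ts$, $\tau_2=sts$, and $\tau_3=tsts$, and since $\langle s,t\rangle\cong S_3$ the element $st$ has order~$3$, giving $tsts=(ts)^2=(st)^{-2}=st$, as claimed. The six elements $\id,s,ts,sts,st,t$ exhaust $\langle s,t\rangle$ and are therefore pairwise distinct, so they do form a $6$-cycle in $\Gamma$, and its six edges carry the labels $s,t,s,t,s,t$, no two consecutive of which commute. The main obstacle is purely bookkeeping: pinning down the product convention so that the eight tuples of Lemma~\ref{Lemma:Cases6cycle} line up with $6$-cycles in $\Gamma$ starting at $\id$ and ending at $t$, and then handling the seven excluded types uniformly by exhibiting an explicit short cycle through $s$ and $t$.
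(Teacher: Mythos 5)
Your proposal is correct and follows essentially the same route as the paper: translate a candidate $6$-cycle into a sequence $a,b,c,d$ satisfying the hypotheses of Lemma~\ref{Lemma:Cases6cycle}, then use the assumption that $s$ and $t$ lie on no common cycle of length at most $4$ (equivalently, $(1\,3)\notin T$ and not both $(1\,k),(3\,k)\in T$) to rule out seven of the eight types, leaving only the alternating type $(t,s,t,s)$. Your write-up is simply more explicit than the paper's about which excluded types force a triangle versus a $4$-cycle in $G$.
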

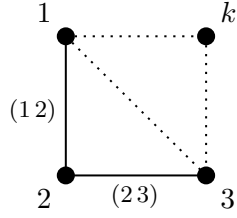
\begin{figure}
\begin{center}
\scalebox{1}{
\tikzset{every picture/.style={line width=0.75pt}} 

\begin{tikzpicture}[x=0.75pt,y=0.75pt,yscale=-1,xscale=1]

\draw  (115,65) -- (115,135) ;
\draw  (115,135) -- (185,135) ;
\draw  [dash pattern={on 0.84pt off 2.51pt}]  (115,65) -- (185,135) ;
\draw  [dash pattern={on 0.84pt off 2.51pt}]  (115,65) -- (185,65) ;
\draw  [dash pattern={on 0.84pt off 2.51pt}]  (185,65) -- (185,135) ;

\filldraw (115,65) circle (3pt);
\filldraw (115,135) circle (3pt);
\filldraw (185,65) circle (3pt);
\filldraw (185,135) circle (3pt);

\draw (115,65) node [anchor=south east][inner sep=5pt]    {$1$};
\draw (115,135) node [anchor=north east][inner sep=5pt]    {$2$};
\draw (185,135) node [anchor=north west][inner sep=5pt]    {$3$};
\draw (185,65) node [anchor=south west][inner sep=5pt]    {$k$};
\draw (85,95) node [anchor=north west][inner sep=0.75pt]  [font=\footnotesize]  {$(1\, 2)$};
\draw (136,138) node [anchor=north west][inner sep=0.75pt]  [font=\footnotesize]  {$(2\, 3)$};

\end{tikzpicture}
}
\end{center}
\caption{Subgraph of $G$ induced by vertices $1, 2, 3$ and any $k \notin \{1,2,3\}$. Existing and non-existing edges are denoted by solid and dotted lines, respectively. \label{Figure:CycleProof}}
\end{figure}
\begin{proof}
Without loss of generality, we assume that $s=(2\,3)$ and $t = (1\,2)$. Choosing $\tau_1 = (1\,2\,3)$, $\tau_2 = (1\, 3)$ and $\tau_3 = (1\, 3\, 2)$, it is clear that the given 6-cycle exists in $\Gamma$. 

To prove uniqueness, write 
\[
s=(2\, 3),\quad  \tau_1=d(2\, 3),\quad  \tau_2=cd(2\, 3),\quad  \tau_3=bcd(2\, 3),\quad  t=(1\, 2)=abcd(2\, 3). 
\]
 Observe that $a,b,c,d$ satisfy the conditions in Lemma~\ref{Lemma:Cases6cycle}. Since $(1\, 2)$ and $(2\, 3)$ are not part of a cycle of length at most $4$, we have $(1\, 3)\not\in T$ and for $k\geq 4$ at most one of $(1\, k)$ and $(3\, k)$ can be in $T$, see Figure~\ref{Figure:CycleProof}. Hence, from the eight types in Lemma~\ref{Lemma:Cases6cycle}, only one remains and we must have $a,b,c,d=(2\, 3), (1\, 2), (2\, 3), (1\, 2)$, proving the uniqueness of the induced $6$-cycle.
 \end{proof}

\section{Proof of main theorem} \label{Section:Proof}
We are now ready to prove Theorem~\ref{Thm:MainTheorem}. Let $\Phi \in \Aut(\Gamma)$ be an automorphism that fixes the identity vertex $\id$ and every transposition in $T$. Based on Theorem~\ref{Theorem:CharacterizationNormal}, it suffices to show that $\Phi$ fixes every vertex of $\Gamma$. Let 
\begin{align}
    U=\{\sigma\in S_n: \Phi(\tau)=\tau\text{ for all }\tau\in N(\sigma)\cup\{\sigma\}\},
\end{align} 
where $N(\sigma)$ denotes the set of neighbors of $\sigma$ in $\Gamma$. 
Observe that $\id \in U$, so $U$ is nonempty. 

Let $\sigma\in U$. Since $\Gamma$ is connected, it suffices to show that $a\sigma\in U$ for all $a\in T$. Observe that for every $a\in T$ the automorphism $\Phi$ fixes $a\sigma$ and  induces a bijection $N(a\sigma)\to N(a\sigma)$. Since each edge adjacent to $a\sigma$ is associated with a transposition in $T$, this induces a bijection $T \to T$. By part (b) of Lemma~\ref{Lemma:Commute}, it follows that this bijection preserves commutativity, i.e., commuting pairs of transpositions are mapped to commuting pairs of transpositions. So identifying transpositions with their corresponding edges in $G$, the bijection is an automorphism $\phi'_a$ of $L(G)$.  By Whitney's Theorem, i.e., Theorem~\ref{Theorem:Whitney}, this automorphism originates from a unique automorphism $\phi_a\in \Aut(G)$. 

It suffices to show that $\phi_a$ is the identity permutation for all $a\in T$. Before we do so, we need the following three intermediate results about the automorphisms~$\phi_a$. 

\begin{lemma}\label{Lemma:disjoint}
Let $a=(i\, j)\in T$. Then 
\begin{itemize}
\item[\textup{(i)}] $\phi_a(\{i,j\})=\{i,j\}$.
\item[\textup{(ii)}] Let $\{k,\ell\}$ be an edge of $G$ disjoint from $\{i,j\}$. Then $\phi_a(\{k,\ell\})=\{k,\ell\}$.
\end{itemize}
\end{lemma}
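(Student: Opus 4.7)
The plan is to exploit the definition of $\phi_a$ via Whitney's theorem together with the characterization of commuting transpositions captured by Lemma~\ref{Lemma:Commute}. Recall that $\phi'_a$ is the bijection on $T$ induced by $\Phi$ restricted to $N(a\sigma)$: if $\Phi$ sends the neighbor $t(a\sigma)$ of $a\sigma$ to $s(a\sigma)$, then $\phi'_a(t)=s$. Since $\phi_a$ is the automorphism of $G$ that induces $\phi'_a$ on $L(G)$, showing $\phi_a(e)=e$ for an edge $e$ of $G$ is equivalent to showing that $\phi'_a$ fixes the corresponding transposition, which in turn amounts to $\Phi$ fixing both endpoints of the associated edge at $a\sigma$ in $\Gamma$.

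For part (i) I would simply observe that $\sigma$ itself lies in $N(a\sigma)$, with labeling transposition $a$ (since $a\cdot (a\sigma)=\sigma$ and $a\in T$). Because $\sigma\in U$, both $\sigma$ and $a\sigma$ are fixed by $\Phi$, so $\phi'_a(a)=a$, giving $\phi_a(\{i,j\})=\{i,j\}$.

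For part (ii), let $b=(k\,\ell)\in T$. Since $\{i,j\}$ and $\{k,\ell\}$ are disjoint, the transpositions $a$ and $b$ commute by part~(a) of Lemma~\ref{Lemma:Commute}, and part~(b) of that lemma then yields a unique $4$-cycle $(\sigma, a\sigma, ab\sigma, b\sigma, \sigma)$ in $\Gamma$. Three of its vertices, namely $\sigma$, $a\sigma$, and $b\sigma$, are fixed by $\Phi$ because $\sigma\in U$. The image under $\Phi$ of our $4$-cycle is again a $4$-cycle through these three vertices, so by the uniqueness clause of Lemma~\ref{Lemma:Commute}(b) the fourth vertex $ab\sigma$ must also be fixed by $\Phi$. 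The edge of $\Gamma$ joining $a\sigma$ to $ab\sigma$ is labeled by the transposition $ab\sigma(a\sigma)^{-1}=aba^{-1}=b$, using that $a$ and $b$ commute and that $a$ is an involution. Hence $\phi'_a(b)=b$, which translates to $\phi_a(\{k,\ell\})=\{k,\ell\}$.

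I do not anticipate a genuine obstacle: the argument is a direct unwinding of the definitions, and the only place where one must be a little careful is correctly identifying the transposition that labels the edge $\{a\sigma, ab\sigma\}$ and invoking the uniqueness statement in Lemma~\ref{Lemma:Commute}(b) rather than merely its existence part.
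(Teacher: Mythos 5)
Your proof is correct and follows essentially the same route as the paper: part (i) comes from $\sigma = a(a\sigma)$ being fixed, and part (ii) from applying Lemma~\ref{Lemma:Commute}(b) to the unique $4$-cycle through $\sigma$, $a\sigma$, $b\sigma$ to conclude that $ab\sigma = ba\sigma$ is fixed. The extra care you take in identifying the label of the edge $\{a\sigma, ab\sigma\}$ as $b$ is a detail the paper leaves implicit, but the argument is the same.
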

\begin{proof}
Since $a(a\sigma)=\sigma$ is fixed by $\Phi$ it follows that $\phi'_a$ fixes the edge $\{i,j\}$ and therefore $\phi_a$ fixes the set $\{i,j\}$.

Write $b=(k\,\ell)$. Since $\sigma, a\sigma, b\sigma$ are fixed by $\Phi$ and $a$ and $b$ commute, it follows by part (b) of Lemma~\ref{Lemma:Commute} that $ba\sigma$ is fixed by $\Phi$. This means that the set $\{k,\ell\}$ is fixed by $\phi_a$. 
\end{proof}

\begin{lemma} \label{Lemma:Triangle}
Let $\{i,j,k\}$ induce a triangle in $G$. Then $\phi_{(i\,j)}(k) = k$.
\end{lemma}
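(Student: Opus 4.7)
The plan is to apply Lemma~\ref{Lemma:Commute}(c) to the three transpositions $a=(i\,j)$, $b=(j\,k)$, $c=(i\,k)$ corresponding to the edges of the triangle. Writing $\tau_1=ca=(i\,j\,k)$ and $\tau_2=ba=(i\,k\,j)$, this produces an induced $K_{3,3}$ in $\Gamma$ on the vertex set $\{\sigma,a\sigma,b\sigma,c\sigma,\tau_1\sigma,\tau_2\sigma\}$, with bipartition $\{\sigma,\tau_1\sigma,\tau_2\sigma\}\mid\{a\sigma,b\sigma,c\sigma\}$.

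Since $\sigma\in U$, the four vertices $\sigma, a\sigma, b\sigma, c\sigma$ are fixed by $\Phi$. The first step is to show that $\Phi$ permutes the pair $\{\tau_1\sigma,\tau_2\sigma\}$. Indeed, $\Phi(\tau_1\sigma)$ is a common neighbor of $\Phi(a\sigma)=a\sigma$, $\Phi(b\sigma)=b\sigma$, $\Phi(c\sigma)=c\sigma$ lying in the same bipartition class of $\Gamma$ as $\sigma$, so the set $\{\sigma,\tau_1\sigma,\Phi(\tau_1\sigma)\}\cup\{a\sigma,b\sigma,c\sigma\}$ would induce a $K_{3,3}$ in $\Gamma$ (assuming $\Phi(\tau_1\sigma)$ is distinct from $\sigma$ and $\tau_1\sigma$). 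By the uniqueness statement in Lemma~\ref{Lemma:Commute}(c), the only such triple of vertices in the $\sigma$-class is $\{\sigma,\tau_1\sigma,\tau_2\sigma\}$, forcing $\Phi(\tau_1\sigma)\in\{\sigma,\tau_1\sigma,\tau_2\sigma\}$; and the case $\Phi(\tau_1\sigma)=\sigma$ is excluded because $\Phi$ fixes $\sigma$ and is injective. An analogous argument applies to $\Phi(\tau_2\sigma)$.

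The second step is to translate this back to the automorphism $\phi_a$ of $G$. The two vertices $\tau_1\sigma=c\cdot a\sigma$ and $\tau_2\sigma=b\cdot a\sigma$ are precisely the neighbors of $a\sigma$ that correspond, under the bijection on $T$, to the transpositions $c=(i\,k)$ and $b=(j\,k)$. Since $\Phi$ permutes $\{\tau_1\sigma,\tau_2\sigma\}$, the permutation $\phi'_a$ permutes $\{b,c\}\subseteq T$, which means $\phi_a$ permutes the edges $\{\{j,k\},\{i,k\}\}$ of $G$. Combining this with $\phi_a(\{i,j\})=\{i,j\}$ from Lemma~\ref{Lemma:disjoint}(i), the automorphism $\phi_a$ permutes the three edges of the triangle and therefore also its vertex set $\{i,j,k\}$. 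Since $\{i,j\}$ is setwise fixed, the remaining vertex $k$ has nowhere else to go, so $\phi_a(k)=k$.

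The main obstacle is the first step, namely showing that the only common neighbors of $a\sigma, b\sigma, c\sigma$ in the $\sigma$-bipartition class are the three vertices $\sigma,\tau_1\sigma,\tau_2\sigma$; this is exactly what the uniqueness clause of Lemma~\ref{Lemma:Commute}(c) delivers. Everything else is bookkeeping with the correspondence between neighbors of $a\sigma$ and elements of $T$.
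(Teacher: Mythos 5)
Your proof is correct and follows essentially the same route as the paper: apply Lemma~\ref{Lemma:Commute}(c) to get the unique $K_{3,3}$ through $\sigma$, $(i\,j)\sigma$, $(j\,k)\sigma$, $(i\,k)\sigma$, deduce that $\Phi$ setwise fixes the remaining pair $\{(j\,k)(i\,j)\sigma,(i\,k)(i\,j)\sigma\}$, and translate this into $\phi_{(i\,j)}$ permuting $\{\{j,k\},\{i,k\}\}$ while fixing $\{i,j\}$, hence fixing $k$. Your argument simply spells out the uniqueness step in more detail than the paper does.
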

\begin{proof} 
    Since $\{i,j\}, \{j,k\}$ and $\{i,k\}$ form a triangle in $G$, it follows from part (c) of Lemma~\ref{Lemma:Commute} that the vertices $\sigma$, $(i\,j)\sigma$, $(j\,k)\sigma$ and $(i\,k)\sigma$ of $\Gamma$ are contained in a unique $K_{3,3}$ subgraph of $\Gamma$. Since $\sigma \in U$, it follows that these vertices are fixed by $\Phi$. This implies that the other two vertices in the $K_{3,3}$, i.e., $(j\,k)(i\,j)\sigma$ and $(i\,k)(i\,j)\sigma$, are setwise fixed by $\Phi$. In other words, $\phi'_{(i\,j)}$ fixes $\{\{j,k\}, \{i,k\}\}$. This implies that $\phi_{(i\,j)}(k) = k$. 
\end{proof}

\begin{lemma} \label{Lemma:InterTriangle}
    Let $\{i,j,k\}$ induce a triangle in $G$. Suppose that $\phi_{(i\,j)}$ fixes $i$, $j$ and~$k$. Then $\phi_{(i\,k)}$ and $\phi_{(j\,k)}$ also fix $i$, $j$ and~$k$. 
\end{lemma}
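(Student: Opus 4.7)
The plan is to exploit the $K_{3,3}$ subgraph of $\Gamma$ associated with the triangle $\{i,j,k\}$, as provided by part (c) of Lemma~\ref{Lemma:Commute}. Setting $a=(i\,j)$, $b=(j\,k)$, $c=(i\,k)$, $\tau_1=(i\,j\,k)$, and $\tau_2=(i\,k\,j)$, the six vertices $\sigma, a\sigma, b\sigma, c\sigma, \tau_1\sigma, \tau_2\sigma$ induce a $K_{3,3}$ with bipartition $\{\sigma, \tau_1\sigma, \tau_2\sigma\}$ versus $\{a\sigma, b\sigma, c\sigma\}$. Since $\sigma \in U$, the vertices $\sigma, a\sigma, b\sigma, c\sigma$ are all fixed by $\Phi$, and a direct check (classifying even permutations $\tau$ with $\tau a, \tau b, \tau c$ all transpositions) shows that the common neighborhood of $a\sigma, b\sigma, c\sigma$ in $\Gamma$ is exactly $\{\sigma, \tau_1\sigma, \tau_2\sigma\}$. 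Consequently, $\Phi$ permutes the pair $\{\tau_1\sigma, \tau_2\sigma\}$.

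I would next rule out the possibility that $\Phi$ swaps $\tau_1\sigma$ and $\tau_2\sigma$. At the vertex $a\sigma$, the edges to $\tau_1\sigma$ and $\tau_2\sigma$ carry the transposition labels $(i\,k)$ and $(j\,k)$ respectively, since $(i\,j\,k)(i\,j)=(i\,k)$ and $(i\,k\,j)(i\,j)=(j\,k)$. Swapping $\tau_1\sigma$ and $\tau_2\sigma$ would therefore force $\phi'_a$ to exchange the $L(G)$-edges $\{i,k\}$ and $\{j,k\}$, so that $\phi_a(\{i,k\}) = \{j,k\}$. This contradicts the hypothesis that $\phi_a = \phi_{(i\,j)}$ fixes each of $i, j, k$, which immediately gives $\phi_a(\{i,k\}) = \{i,k\}$. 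Hence $\Phi$ fixes both $\tau_1\sigma$ and $\tau_2\sigma$.

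With all six vertices of the $K_{3,3}$ now fixed by $\Phi$, the conclusion can be read off. An analogous short computation at $c\sigma = (i\,k)\sigma$ shows that the three edges joining $c\sigma$ to $\sigma, \tau_1\sigma, \tau_2\sigma$ carry the labels $(i\,k), (j\,k), (i\,j)$. Therefore $\phi'_{(i\,k)}$ fixes each of the edges $\{i,k\}, \{j,k\}, \{i,j\}$ of $L(G)$; taking pairwise intersections yields $\phi_{(i\,k)}(i) = i$, $\phi_{(i\,k)}(j) = j$, $\phi_{(i\,k)}(k) = k$. The same argument at $b\sigma = (j\,k)\sigma$ gives the corresponding statement for $\phi_{(j\,k)}$. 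The main obstacle is not conceptual but bookkeeping: one must carefully track which transposition labels which edge at each of the vertices $a\sigma, b\sigma, c\sigma$ for the contradiction step and the concluding deduction to line up correctly.
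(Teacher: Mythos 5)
Your proof is correct and follows essentially the same route as the paper: both hinge on showing that $\Phi$ fixes the two $3$-cycle vertices $(i\,j\,k)\sigma$ and $(i\,k\,j)\sigma$ and then re-reading the edges at $(i\,k)\sigma$ and $(j\,k)\sigma$ to conclude that $\phi_{(i\,k)}$ and $\phi_{(j\,k)}$ fix $i$, $j$ and $k$. The only difference is that your detour through the $K_{3,3}$ and the swap-elimination is unnecessary: the hypothesis that $\phi_{(i\,j)}$ fixes $i$, $j$, $k$ already forces $\phi'_{(i\,j)}$ to fix the edges $\{i,k\}$ and $\{j,k\}$, which directly fixes $(i\,k)(i\,j)\sigma$ and $(j\,k)(i\,j)\sigma$, as the paper does in one line.
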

\begin{proof}
    That $\phi_{(i\, j)}$ fixes $i$, $j$ and $k$, implies that $\phi'_{(i\, j)}$ fixes the edges $\{i,k\}$ and $\{j,k\}$. So $\Phi$ fixes $\tau=(i\, k)(i\, j)\sigma$ and $\tau'=(j\, k)(i\, j)\sigma$. 
    
    Rewriting $\tau=(j\, k)(i\, k)\sigma$ and $\tau'=(i\, j)(i\, k)\sigma$ we see that $\phi'_{(i\, k)}$ fixes the edges $\{j,k\}$ and $\{i,j\}$, and also the edge $\{i,k\}$ by part (i) of Lemma~\ref{Lemma:disjoint}. It follows that $\phi_{(i\, k)}$ fixes $i$, $j$ and $k$. 
    Similarly, rewriting $\tau=(i\, j)(j\, k)\sigma$ and $\tau'=(i\, k)(j\, k)\sigma$, we find that $\phi'_{(j\, k)}$ fixes the edges $\{i,j\}$, $\{i,k\}$ and $\{j,k\}$, and therefore $\phi_{(j\, k)}$ fixes $i$, $j$ and $k$.     
\end{proof}

\begin{lemma} \label{Lemma:Cycle}
    Let $\{i,j\}$ and $\{i,k\}$ be edges of $G$ that are not on a common cycle of length at most $4$. Then $\phi_{(i\,j)}$ fixes $i$, $j$ and $k$.
\end{lemma}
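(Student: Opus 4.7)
The plan is to invoke Lemma~\ref{Lemma:CycleCayley} with $s=(i\,j)$ and $t=(i\,k)$. These transpositions do not commute since their corresponding edges in $G$ share the vertex~$i$, and by hypothesis they are not in a common cycle of length at most~$4$. The lemma therefore produces a unique $6$-cycle
\[
C=(\sigma,\ s\sigma,\ ts\sigma,\ sts\sigma,\ st\sigma,\ t\sigma,\ \sigma)
\]
in $\Gamma$ whose consecutive edges correspond to non-commuting transpositions.

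Since $\sigma\in U$, the automorphism $\Phi$ fixes $\sigma$, $s\sigma$ and $t\sigma$, so $\Phi(C)$ is a $6$-cycle of the form $(\sigma, s\sigma,\ast,\ast,\ast, t\sigma,\sigma)$. To deduce $\Phi(C)=C$ from the uniqueness in Lemma~\ref{Lemma:CycleCayley}, one must check that $\Phi(C)$ still has the property that consecutive edges correspond to non-commuting transpositions. This is in fact a purely graph-theoretic condition on $\Gamma$: by Lemma~\ref{Lemma:Commute}(b), two consecutive edges of a path with middle vertex~$v$ have commuting labels if and only if the two endpoints of the path share exactly one common neighbor besides~$v$. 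Being a statement about neighbors and common neighbors, it is preserved by the graph automorphism~$\Phi$, so the non-commuting property passes from $C$ to $\Phi(C)$, yielding $\Phi(C)=C$ pointwise.

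In particular, $\Phi$ fixes $ts\sigma$, which is the neighbor of the fixed vertex $s\sigma$ along the edge corresponding to the transposition $t=(i\,k)$. Hence the induced automorphism $\phi'_s$ of $L(G)$ fixes the edge $\{i,k\}$, and by Lemma~\ref{Lemma:disjoint}(i) it also fixes the edge $\{i,j\}$. Thus $\phi_s\in\Aut(G)$ setwise fixes two distinct edges that share only the vertex~$i$. If $\phi_s$ swapped $i$ and $j$, then $\phi_s(i)=j$ would need to lie in $\phi_s(\{i,k\})=\{i,k\}$, forcing $j\in\{i,k\}$, a contradiction. Therefore $\phi_s(i)=i$, and consequently $\phi_s(j)=j$ and $\phi_s(k)=k$.

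The main obstacle is the middle step of justifying $\Phi(C)=C$ via Lemma~\ref{Lemma:CycleCayley}; the delicate point is recasting the non-commuting condition on consecutive edges as a property preserved by graph automorphisms, which I handle through the equivalence in Lemma~\ref{Lemma:Commute}(b). Once the cycle is pinned down, the rest is a short local analysis of $\phi_s$.
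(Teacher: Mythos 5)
Your proof is correct and follows essentially the same route as the paper: apply Lemma~\ref{Lemma:CycleCayley} to get the unique $6$-cycle through $\sigma$, $(i\,j)\sigma$, $(i\,k)\sigma$, conclude $\Phi$ fixes it, and read off that $\phi'_{(i\,j)}$ fixes both $\{i,j\}$ and $\{i,k\}$, hence $\phi_{(i\,j)}$ fixes their common vertex $i$ and then $j$ and $k$. Your extra care in checking, via Lemma~\ref{Lemma:Commute}(b), that the non-commuting condition on consecutive edges is a graph-theoretic property preserved by $\Phi$ fills in a step the paper leaves implicit, but it is the same argument.
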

\begin{proof}
    As the transpositions $(i\,j)$ and $(i\,k)$ do not commute and their corresponding edges are not in a common cycle of length at most $4$, it follows from Lemma~\ref{Lemma:CycleCayley} that the vertices $(i\, j)\sigma$, $\sigma$ and $(i\,k)\sigma$ are consecutive vertices on a unique $6$-cycle in $\Gamma$ with the property that any two consecutive edges correspond to transpositions that do not commute. Since $\Phi$ fixes the vertices $(i\, j)\sigma$, $\sigma$ and $(i\,k)\sigma$ it must fix all vertices in this $6$-cycle. In particular, it fixes $(i\,k)(i\,j)\sigma$, so $\phi'_{(i\,j)}$ fixes $\{i,k\}$. Since $\phi'_{(i\,j)}$ also fixes $\{i,j\}$, it follows that $\phi_{(i\,j)}$ fixes $i$, $j$ and~$k$. 
\end{proof}

Now, we are ready to prove that $\phi_a$ is the identity permutation for all $a \in T$. Without loss of generality, we will assume $a=(1\,2)$. Let $H:= G[\{3,\ldots, n\}]$ be the graph obtained by deleting the vertices $1$ and $2$ from $G$. Let $C$ be a connected component of $H$. We consider the following cases. 
\begin{itemize}
\item[I.] Suppose $C$ consists of a single vertex $v$. Then $\phi_a$ fixes $1$, $2$ and $v$.

\begin{proof}
First suppose that $\{1,2,v\}$ induces a triangle in $G$. Since $n \geq 5$, it follows that at least one of the vertices $1$ and $2$ has a neighbor in a different component. So without loss of generality, we will assume $d(1) > d(v)$, where $d(\cdot)$ denotes the degree of a vertex in $G$. Since the automorphism $\phi_{(1\,v)}$ fixes the pair $\{1,v\}$ and these vertices have different degrees, it must fix both vertex $1$ and $v$. By Lemma~\ref{Lemma:Triangle}, it also fixes vertex $2$. Hence, by Lemma~\ref{Lemma:InterTriangle}, it follows that also $\phi_a$ fixes vertices $1$, $2$ and $v$.

Next, suppose that $\{1, 2, v\}$ does not induce a triangle in $G$. Without loss of generality, suppose $\{2,v\}\not\in E$, but $\{1,v\} \in E$. Since $\{1,v\}$ and $\{1,2\}$ are adjacent edges not in a cycle in $G$, Lemma~\ref{Lemma:Cycle} implies that $\phi_a$ fixes the vertices~$1$, $2$ and~$v$.
\end{proof}

\item[II.] Suppose that $C$ has $2$ vertices, say $v$ and $w$. We consider the following subcases, see Figure~\ref{Figure:caseII}.
\begin{itemize}
\item[(a)] Suppose that $\{1,2,v,w\}$ induces a path or a $K_4-e$. Then $\phi_a$ fixes $1$, $2$, $v$ and $w$.

\begin{proof}
Since $\phi_a$ fixes the pair $\{v,w\}$ by Lemma~\ref{Lemma:disjoint} and $d(v)\neq d(w)$, it fixes both $v$ and~$w$. Since $\phi_a$ also fixes the pair $\{1,2\}$ and they do not have the same number of neighbors among $\{v,w\}$, $\phi_a$ must fix both $1$ and $2$.
\end{proof}

\item[(b)] Suppose that $\{1,2,v,w\}$ induces a triangle with a pendant edge. Then $\phi_a$ fixes $1$, $2$, $v$ and $w$.

\begin{proof}
If $\{1,2\}$ is the pendant edge, we may assume without loss of generality that the triangle is induced by $\{2,v,w\}$. Then $\{1,2\}$ and $\{2,w\}$ are adjacent edges not in a common cycle, so by Lemma~\ref{Lemma:Cycle} the vertices $1$, $2$ and $w$ are fixed by $\phi_a$. Then since $\{v,w\}$ is fixed by Lemma~\ref{Lemma:disjoint}, also $v$ is fixed by $\phi_a$. 

If $\{v,w\}$ is the pendant edge, we may assume without loss of generality that $\{1,2,v\}$ induces a triangle. We first consider the automorphism $\phi_{(1\, v)}$. Since $\{1,v\}$ and $\{v,w\}$ are adjacent edges not in a common cycle, Lemma~\ref{Lemma:Cycle} implies that $\phi_{(1\, v)}$ fixes the vertices $1$, $v$ and $w$. By Lemma~\ref{Lemma:Triangle}, $\phi_{(1\, v)}$ also fixes vertex $2$. It now follows from Lemma~\ref{Lemma:InterTriangle} that also $\phi_a$ fixes vertices $1$, $2$ and $v$, and therefore also vertex $w$ as it fixes $\{v,w\}$.
\end{proof}

\item[(c)] Suppose that $\{1,2,v,w\}$ induces a $K_4$. Then $\phi_a$ fixes $1$, $2$, $v$ and $w$.

\begin{proof}
Since $G$ has at least five vertices, we may assume without loss of generality that $d(1)\geq 4$ in the graph $G$. Since $d(v)=3<d(1)$, the automorphism $\phi_{(1\, v)}$ must fix vertices $1$ and $v$. By Lemma~\ref{Lemma:Triangle}, also vertex $2$ is fixed by $\phi_{(1\, v)}$. By Lemma~\ref{Lemma:InterTriangle} it now follows that also $\phi_a$ fixes vertices $1$, $2$ and $v$, and therefore also vertex $w$.
\end{proof}

\item[(d)] Suppose that $\{1,2,v,w\}$ induces a $C_4$. Then either $\phi_a$ fixes the four vertices $1$, $2$, $v$, $w$, or $\phi_a$ swaps vertices $1$ and $2$ and also swaps vertices $v$ and $w$.

\begin{proof}
Since $\phi_a$ fixes the pair $\{1,2\}$ and the pair $\{v,w\}$ by Lemma~\ref{Lemma:disjoint}, these are clearly the only two options.
\end{proof}
\end{itemize}

\begin{figure}[h]
\centering
\scalebox{0.7}{
\tikzset{every picture/.style={line width=0.75pt}}

\begin{tikzpicture}[x=0.75pt,y=0.75pt,yscale=-1,xscale=1]

\draw    (45.5,265.5) -- (45.5,335.5);
\draw    (45.5,335.5) -- (115.5,335.5);
\draw    (45.5,265.5) -- (115.5,265.5);

\filldraw (45.5,265.5) circle (3pt);
\filldraw (45.5,335.5) circle (3pt);
\filldraw (115.5,335.5) circle (3pt);
\filldraw (115.5,265.5) circle (3pt);

\draw (45.5,265.5) node [anchor=south east][inner sep=4pt]  {$1$};
\draw (115.5,265.5) node [anchor=south west][inner sep=4pt]  {$2$};
\draw (45.5,335.5) node [anchor=north east][inner sep=4pt]  {$v$};
\draw (115.5,335.5) node [anchor=north west][inner sep=4pt] {$w$};

\begin{scope}[yshift=120pt]
\draw    (45.5,265.5) -- (115.5,265.5);
\draw    (115.5,265.5) -- (45.5,335.5);
\draw    (45.5,335.5) -- (115.5,335.5);

\filldraw (45.5,265.5) circle (3pt);
\filldraw (45.5,335.5) circle (3pt);
\filldraw (115.5,335.5) circle (3pt);
\filldraw (115.5,265.5) circle (3pt);

\draw (45.5,265.5) node [anchor=south east][inner sep=4pt]  {$1$};
\draw (115.5,265.5) node [anchor=south west][inner sep=4pt]  {$2$};
\draw (45.5,335.5) node [anchor=north east][inner sep=4pt]  {$v$};
\draw (115.5,335.5) node [anchor=north west][inner sep=4pt] {$w$};
\end{scope}

\begin{scope}[xshift=105pt]
\draw    (45.5,265.5) -- (45.5,335.5);
\draw    (45.5,265.5) -- (115.5,265.5);
\draw    (45.5,265.5) -- (115.5,335.5);

\draw    (45.5,335.5) -- (115.5,335.5);
\draw    (115.5,265.5) -- (115.5,335.5);

\filldraw (45.5,265.5) circle (3pt);
\filldraw (45.5,335.5) circle (3pt);
\filldraw (115.5,335.5) circle (3pt);
\filldraw (115.5,265.5) circle (3pt);

\draw (45.5,265.5) node [anchor=south east][inner sep=4pt]  {$1$};
\draw (115.5,265.5) node [anchor=south west][inner sep=4pt]  {$2$};
\draw (45.5,335.5) node [anchor=north east][inner sep=4pt]  {$v$};
\draw (115.5,335.5) node [anchor=north west][inner sep=4pt] {$w$};
\end{scope}

\begin{scope}[xshift=105pt, yshift=120]
\draw    (45.5,265.5) -- (45.5,335.5);
\draw    (45.5,265.5) -- (115.5,265.5);
\draw    (115.5,265.5) -- (45.5,335.5);

\draw    (45.5,335.5) -- (115.5,335.5);
\draw    (115.5,265.5) -- (115.5,335.5);

\filldraw (45.5,265.5) circle (3pt);
\filldraw (45.5,335.5) circle (3pt);
\filldraw (115.5,335.5) circle (3pt);
\filldraw (115.5,265.5) circle (3pt);

\draw (45.5,265.5) node [anchor=south east][inner sep=4pt]  {$1$};
\draw (115.5,265.5) node [anchor=south west][inner sep=4pt]  {$2$};
\draw (45.5,335.5) node [anchor=north east][inner sep=4pt]  {$v$};
\draw (115.5,335.5) node [anchor=north west][inner sep=4pt] {$w$};
\end{scope}

\begin{scope}[xshift=210pt]
\draw    (45.5,265.5) -- (115.5,265.5);
\draw    (45.5,335.5) -- (115.5,335.5);
\draw    (115.5,265.5) -- (115.5,335.5);
\draw    (115.5,265.5) -- (45.5,335.5);

\filldraw (45.5,265.5) circle (3pt);
\filldraw (45.5,335.5) circle (3pt);
\filldraw (115.5,335.5) circle (3pt);
\filldraw (115.5,265.5) circle (3pt);

\draw (45.5,265.5) node [anchor=south east][inner sep=4pt]  {$1$};
\draw (115.5,265.5) node [anchor=south west][inner sep=4pt]  {$2$};
\draw (45.5,335.5) node [anchor=north east][inner sep=4pt]  {$v$};
\draw (115.5,335.5) node [anchor=north west][inner sep=4pt] {$w$};
\end{scope}

\begin{scope}[xshift=265pt, yshift=120]
\draw    (45.5,265.5) -- (45.5,335.5);
\draw    (45.5,265.5) -- (115.5,265.5);
\draw    (115.5,265.5) -- (45.5,335.5);
\draw    (45.5,335.5) -- (115.5,335.5);

\filldraw (45.5,265.5) circle (3pt);
\filldraw (45.5,335.5) circle (3pt);
\filldraw (115.5,335.5) circle (3pt);
\filldraw (115.5,265.5) circle (3pt);

\draw (45.5,265.5) node [anchor=south east][inner sep=4pt]  {$1$};
\draw (115.5,265.5) node [anchor=south west][inner sep=4pt]  {$2$};
\draw (45.5,335.5) node [anchor=north east][inner sep=4pt]  {$v$};
\draw (115.5,335.5) node [anchor=north west][inner sep=4pt] {$w$};
\end{scope}

\begin{scope}[xshift=315pt]
\draw    (45.5,265.5) -- (115.5,265.5);
\draw    (45.5,335.5) -- (115.5,335.5);
\draw    (45.5,265.5) -- (115.5,335.5);
\draw    (45.5,265.5) -- (45.5,335.5);

\filldraw (45.5,265.5) circle (3pt);
\filldraw (45.5,335.5) circle (3pt);
\filldraw (115.5,335.5) circle (3pt);
\filldraw (115.5,265.5) circle (3pt);

\draw (45.5,265.5) node [anchor=south east][inner sep=4pt]  {$1$};
\draw (115.5,265.5) node [anchor=south west][inner sep=4pt]  {$2$};
\draw (45.5,335.5) node [anchor=north east][inner sep=4pt]  {$v$};
\draw (115.5,335.5) node [anchor=north west][inner sep=4pt] {$w$};
\end{scope}




\begin{scope}[xshift=420pt]
\draw    (45.5,265.5) -- (115.5,265.5);
\draw    (45.5,265.5) -- (45.5,335.5);
\draw    (45.5,265.5) -- (115.5,335.5);
\draw    (45.5,335.5) -- (115.5,335.5);
\draw    (45.5,335.5) -- (115.5,265.5);
\draw    (115.5,265.5) -- (115.5,335.5);

\filldraw (45.5,265.5) circle (3pt);
\filldraw (45.5,335.5) circle (3pt);
\filldraw (115.5,335.5) circle (3pt);
\filldraw (115.5,265.5) circle (3pt);

\draw (45.5,265.5) node [anchor=south east][inner sep=4pt]  {$1$};
\draw (115.5,265.5) node [anchor=south west][inner sep=4pt]  {$2$};
\draw (45.5,335.5) node [anchor=north east][inner sep=4pt]  {$v$};
\draw (115.5,335.5) node [anchor=north west][inner sep=4pt] {$w$};
\end{scope}

\begin{scope}[xshift=420pt, yshift=120]
\draw    (45.5,265.5) -- (45.5,335.5);
\draw    (45.5,265.5) -- (115.5,265.5);
\draw    (115.5,265.5) -- (115.5,335.5);
\draw    (45.5,335.5) -- (115.5,335.5);

\filldraw (45.5,265.5) circle (3pt);
\filldraw (45.5,335.5) circle (3pt);
\filldraw (115.5,335.5) circle (3pt);
\filldraw (115.5,265.5) circle (3pt);

\draw (45.5,265.5) node [anchor=south east][inner sep=4pt]  {$1$};
\draw (115.5,265.5) node [anchor=south west][inner sep=4pt]  {$2$};
\draw (45.5,335.5) node [anchor=north east][inner sep=4pt]  {$v$};
\draw (115.5,335.5) node [anchor=north west][inner sep=4pt] {$w$};
\end{scope}

\draw   (10,220) -- (570+140,220) -- (570+140,550) -- (10,550) -- cycle ;
\draw    (150,220) -- (150,550) ;
\draw    (290,220) -- (290,550) ;
\draw    (430+140,220) -- (430+140,550) ;
\draw    (430+140,380) -- (570+140,380) ;

\draw (173,520) node [anchor=north west][align=left] {(a) $\displaystyle K_{4} \ -\ e$ };
\draw (323,520) node [anchor=north west][align=left] {(b) triangle with pendant edge};
\draw (488.5+123,350) node [anchor=north west][align=left] {(c) $\displaystyle K_{4}$ };
\draw (492+123,520) node [anchor=north west][align=left] {(d) $\displaystyle C_{4}$ };
\draw (48,520) node [anchor=north west][align=left] {(a) path};

\end{tikzpicture}
}
\caption{Options (up to renaming $v$ and $w$) for the subgraph of $G$ induced by $\{1, 2, v, w\} $ corresponding to case II. \label{Figure:caseII}}
\end{figure}
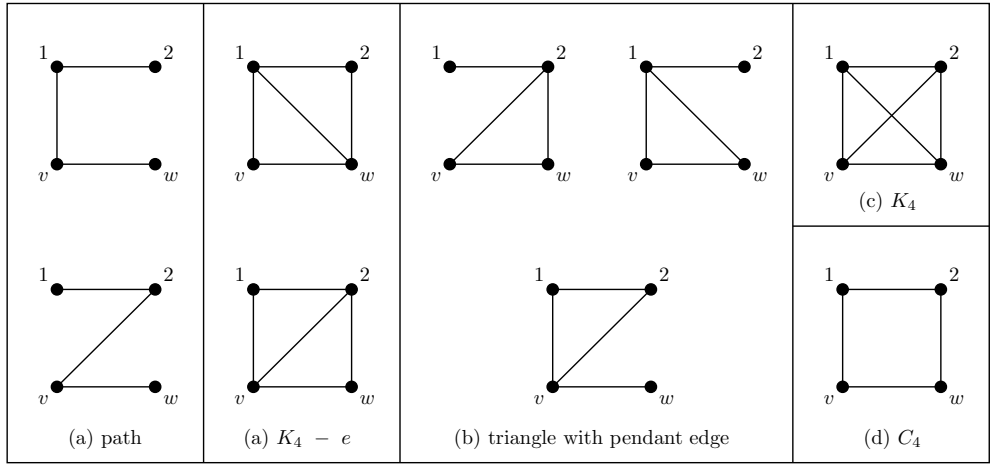

\item[III.] Suppose that $C$ has at least three vertices. Then vertices $1$ and $2$ and all vertices in $C$ are fixed by $\phi_a$.

\begin{proof}
Since all edges in $C$ are disjoint from $\{1,2\}$, it follows from Lemma~\ref{Lemma:disjoint} that $\phi_a$ fixes every edge of $C$. Since $C$ is connected and has at least three vertices, there must be a vertex $v$ that is incident with multiple edges. Hence $v$ is fixed by $\phi_a$. Since $\phi_a$ fixes all edges, any neighbor of a fixed vertex is fixed, so $\phi_a$ fixes every vertex of $C$.  

If vertex $1$ and $2$ do not have the same neighbors in $C$, then $1$ and $2$ are fixed by $\phi_a$ and we are done. Therefore, we will assume that vertex $1$ and $2$ have the same set $S$ of neighbors in $C$. Suppose  there exists a $v \in S$ with $d(v) \neq d(1)$ or $d(v)\neq d(2)$, say without loss of generality that $d(v)\neq d(1)$. Then vertex $1$ and $v$ are fixed in $\phi_{(1\,v)}$, and by Lemma~\ref{Lemma:Triangle} so is vertex $2$. It then follows by Lemma~\ref{Lemma:InterTriangle} that $1$,  $2$ and $v$ are also fixed by $\phi_a$ and we are done. So from now on, we will assume that $d(1) = d(2) = d(v)$ for all $v \in S$. 

If $C \setminus S = \emptyset$, then $d(1)=d(2)\geq |S|+1\geq d(v)=d(1)$ for all $v\in S$. But equality can only hold if every two vertices in $S$ are connected by an edge and $1$ and $2$ have no neighbors in other components. This means that $G=K_{2+|S|}$, which is prevented by the theorem statement. 

So we can assume that $C \setminus S \neq \emptyset$. Since $C$ is connected, there must exist a $w \in S$ that is adjacent to a vertex $u\in C \setminus S$. This situation is depicted in Figure~\ref{Figure:C4proof}. 
If $u$ has only one neighbor in $S$, then $\{1,w\}$ and $\{w,u\}$ are not in a common cycle of length at most $4$. Hence, by Lemma~\ref{Lemma:Cycle}, the automorphism $\phi_{(1\, w)}$ fixes vertices $1$, $w$ and $u$. It also fixes vertex $2$ by lemma~\ref{Lemma:Triangle}. It follows by Lemma~\ref{Lemma:InterTriangle} that also $\phi_a$ fixes vertices $1$, $2$ and $w$ and we are done.

So we may assume that every $u\in C\setminus S$ that is a neighbor of $w$ has at least one other neighbor in $S$. Set $W=V(C)\cup\{2\}\setminus\{w\}$. It follows that $G[W]$ is connected and has at least three vertices. So $\phi_{(1\, w)}$ fixes all vertices in $W$. Since $1$ and $w$ do not have the same neighbors in $W$, also $1$ and $w$ are fixed by $\phi_{(1\, w)}$. By Lemma~\ref{Lemma:InterTriangle} it now follows that also $\phi_a$ fixes vertices $1$, $2$ and $w$. 
\end{proof}

\begin{figure}[h]
\begin{center}
\scalebox{0.7}{
\tikzset{every picture/.style={thick}} 

\begin{tikzpicture}[]

\draw (0,1)--(0,3);
\draw (0,1)--(4,0)--(0,3);
\draw (0,1)--(4,1)--(0,3);
\draw (0,1)--(4,2);
\draw[red] (4,2)--(0,3);
\draw (0,1)--(4,3)--(0,3);
\draw (0,1)--(4,4)--(0,3);

\draw[red] (4,2)--(8,2);
\draw[dashed] (8,2)--(4,0);
\draw[dashed] (8,2)--(4,1);
\draw[dashed] (8,2)--(4,3);
\draw[dashed] (8,2)--(4,4);

\draw (4,2) ellipse (1cm and 2.8cm);
\draw (8,2) ellipse (1cm and 2.8cm);

\draw (0,3) node [anchor=south east][inner sep=4pt]   {$1$};
\draw (0,1) node [anchor=north east][inner sep=4pt]   {$2$};
\draw (4,5) node [anchor=south][inner sep=0.75pt]   {$S$};
\draw (8,5) node [anchor=south][inner sep=0.75pt]   {$C\setminus S$};
\draw (4,2) node [anchor=north west][inner sep=4pt]   {$w$};
\draw (8,2) node [anchor=north west][inner sep=4pt]   {$u$};

\filldraw [black] (0,1) circle (3pt);
\filldraw [black] (0,3) circle (3pt);

\filldraw [black] (4,0) circle (3pt);
\filldraw [black] (4,1) circle (3pt);
\filldraw [black] (4,2) circle (3pt);
\filldraw [black] (4,3) circle (3pt);
\filldraw [black] (4,4) circle (3pt);

\filldraw [black] (8,0) circle (3pt);
\filldraw [black] (8,1) circle (3pt);
\filldraw [black] (8,2) circle (3pt);
\filldraw [black] (8,3) circle (3pt);
\filldraw [black] (8,4) circle (3pt);

\end{tikzpicture}
}
\caption{Overview of component $C$ with $|C| \geq 3$ when $S \setminus C \neq \emptyset$. Edge $\{w,u\}$ does exist. Dotted edges might or might not exist. 
\label{Figure:C4proof}}
\end{center}
\end{figure}
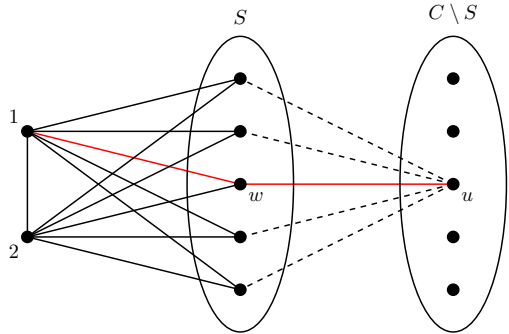

\end{itemize}

Combining the three cases above, we see that $\phi_a$ is the identity unless every component of $H$ is of type II(d). In that case, there must be $t\geq 2$ such components and $\{1,2\}$ is the unique edge of $G$ that is adjacent to $2t$ edges. Thus, the transposition $(1\,2)$ is the unique transposition that does not commute with $2t$ other transpositions. So the edges of the Cayley graph $\Gamma$ corresponding to $(1\,2)$ are permuted among themselves by $\Phi$. Since there is a unique $6$-cycle through vertices $(1\,2)\sigma, \sigma, (1\,v)\sigma$ in $\Gamma$ using three edges corresponding to $(1\,2)$, the vertices of this $6$-cycle must be fixed by $\Phi$, implying that $\phi_{a}$ fixes $1$, $2$ and $v$. 

We conclude that the automorphism $\phi_{a}$ is the identity permutation, completing the proof of Theorem~\ref{Thm:MainTheorem}.

\bibliographystyle{plain}
\bibliography{cayley}

\providecommand{\noopsort}[1]{}
\begin{thebibliography}{10}

\bibitem{biggs1993algebraic}
Norman Biggs.
\newblock {\em Algebraic graph theory}.
\newblock Number~67 in Cambridge Mathematical Library. Cambridge university
  press, 1993.

\bibitem{feng2006automorphism}
Yan-Quan Feng.
\newblock Automorphism groups of cayley graphs on symmetric groups with
  generating transposition sets.
\newblock {\em Journal of Combinatorial Theory, Series B}, 96(1):67--72, 2006.

\bibitem{ganesan2013automorphism}
Ashwin Ganesan.
\newblock Automorphism groups of cayley graphs generated by connected
  transposition sets.
\newblock {\em Discrete Mathematics}, 313(21):2482--2485, 2013.

\bibitem{ganesan2015automorphism}
Ashwin Ganesan.
\newblock Automorphism group of the complete transposition graph.
\newblock {\em Journal of Algebraic Combinatorics}, 42:793--801, 2015.

\bibitem{ganesan2016automorphism}
Ashwin Ganesan.
\newblock On the automorphism group of cayley graphs generated by
  transpositions.
\newblock {\em Australasian Journal of Combinatorics}, 64:432--436, 2016.

\bibitem{GanesanInterconnection}
Ashwin Ganesan.
\newblock Cayley graphs and symmetric interconnection networks.
\newblock arXiv:1703.08109, 2017.

\bibitem{godsil2001algebraic}
Chris Godsil and Gordon~F Royle.
\newblock {\em Algebraic graph theory}, volume 207.
\newblock Springer Science \& Business Media, 2001.

\bibitem{godsil1981full}
Chris~D Godsil.
\newblock On the full automorphism group of a graph.
\newblock {\em Combinatorica}, 1:243--256, 1981.

\bibitem{Heydemann}
Marie-Claude Heydemann.
\newblock {\em Cayley graphs and interconnection networks}, pages 167--224.
\newblock Springer Netherlands, Dordrecht, 1997.

\bibitem{Jerrum}
Mark~R. Jerrum.
\newblock The complexity of finding minimum-length generator sequences.
\newblock {\em Theoretical Computer Science}, 36:265--289, 1985.

\bibitem{MatsuoYamashita}
Atsushi Matsuo and Shigeru Yamashita.
\newblock Changing the gate order for optimal lnn conversion.
\newblock In A.~De~Vos and R.~Wille, editors, {\em Reversible Computation},
  pages 89--101, Berlin, Heidelberg, 2012. Springer Berlin Heidelberg.

\bibitem{DeMeijerEtAl2024}
Frank \noopsort{Meijer}{de Meijer}, Dion Gijswijt, and Renata Sotirov.
\newblock Exploiting symmetries in optimal quantum circuit design.
\newblock arXiv:2401.08262, 2024.

\bibitem{stacho1998bisection}
Ladislav Stacho and Imrich Vrt'o.
\newblock Bisection width of transposition graphs.
\newblock {\em Discrete Applied Mathematics}, 84(1-3):221--235, 1998.

\bibitem{Whitney1932}
Hassler Whitney.
\newblock Congruent graphs and the connectivity of graphs.
\newblock {\em American Journal of Mathematics}, 54:150--168, 1932.

\bibitem{Xu1998}
Ming-Yao Xu.
\newblock Automorphism groups and isomorphisms of cayley digraphs.
\newblock {\em Discrete Mathematics}, 182:309--319, 1998.

\bibitem{zhang2005automorphism}
Zhao Zhang and Qiong-xiang Huang.
\newblock Automorphism groups of bubble sort graphs and modified bubble sort
  graphs.
\newblock {\em Advances in Mathematics (China)}, 34(4):441--447, 2005.

\end{thebibliography}

\end{document}